\pgfplotsset{compat=1.15}
\colorlet{shadecolor}{yellow!20}
\numberwithin{equation}{section}
\newtheorem{theorem}{Theorem}[section]
\newtheorem{lemma}[theorem]{Lemma}
\newtheorem{claim}[theorem]{Claim}
\newtheorem{cor}[theorem]{Corollary}
\newcommand{\thistheoremname}{}
\newtheorem*{genericthm*}{\thistheoremname}
\newenvironment{namedthm*}[1]
  {\renewcommand{\thistheoremname}{#1}%
   \begin{genericthm*}}
  {\end{genericthm*}}
\theoremstyle{definition}       {         
\newtheorem{remark}[theorem]{Remark}

}
\newcommand{\R}{\mathbb{R}}
\def\beq{\begin{equation}}
\def\eeq{\end{equation}}
\newcommand{\semmi}[1]{{}}
\newcommand{\bp}{\begin{proof}}
\newcommand{\ep}{\end{proof}}
\renewcommand{\phi}{\varphi}
\newcommand{\mj}[1]{}
\author{Márk Kökényesi}
\title{Simultaneous Rotation of Infinitely Many Parallel Line Segments}
\begin{document}

\iffalse
    
\begin{titlepage}
   \begin{center}
       \vspace*{1cm}

       \textbf{\huge Simultaneous Rotation of Infinitely Many Parallel Line Segments}

       \vspace{1.5cm}

       \textbf{\Large Márk Kökényesi}

        {Institute of Mathematics, ELTE E\"otv\"os Lor\'and University, 
P\'az\-m\'any~P\'e\-ter~s\'et\'any~1/c, H-1117 Budapest, Hungary}

        \textit{E-mail address:} mark.p.kokenyesi@gmail.com
       \vfill
 
            \includegraphics[width=0.8\textwidth]{eltelogo.pdf}
       \vspace{0.8cm}

   \end{center}
\end{titlepage}
\fi

\address
{Institute of Mathematics, ELTE E\"otv\"os Lor\'and University, 
P\'az\-m\'any P\'e\-ter s\'et\'any 1/c, H-1117 Budapest, Hungary}

\email{mark.p.kokenyesi@gmail.com}

\subjclass[2020]{28A75}

\keywords{strong Kakeya property, rotating a square}

\maketitle

\begin{abstract}
    In 1971, Davies proved that finitely many parallel line segments can be simultaneously fully rotated in an arbitrarily small area.
    In this paper we show that an even stronger statement holds: The unit square can be fully rotated in such a way that each initially vertical line segment sweeps a set of small area.
    
    A set in $\mathbb{R}^n$ is said to have the strong Kakeya property if for any two of its positions, the set can be continuously moved between these two positions in an arbitrarily small volume.
    We use the above result to show that a wide family of sets in $\mathbb{R}^3$, for instance the lateral surface of a cylinder, have the strong Kakeya property.
\end{abstract}

\section{Introduction}

In 1917, Kakeya posed a question now known as the Kakeya needle problem: Among sets, in which a line segment of unit length can be fully rotated, which one has minimal area? Besicovitch \cite{besi} showed that there is no such minimum, the area of such a set can be arbitrarily small. In 1971, Davies \cite{Da} demonstrated that more is true: finitely many parallel line segments can be simultaneously rotated, in an arbitrarily small area. Our main result is even stronger: 

\begin{theorem}\label{t:mainresult}
    For every $\varepsilon>0$ there exists a continuous motion of the unit square during which every initially vertical line segment sweeps at most $\varepsilon$ area, while the square does a full rotation.
\end{theorem}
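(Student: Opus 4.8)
The plan is to recast the motion as a finite concatenation of elementary rigid sub-motions and to adapt the classical Besicovitch--P\'al scheme for rotating a single needle in small area so that it moves the whole square while keeping the area swept by \emph{each} initially vertical segment small. Write the motion as $M_t(p)=R_tp+v_t$, where $R_t$ is rotation by $\theta_t$ and $v_t$ is a translation. The segment initially at horizontal position $x$ is carried at time $t$ to $v_t+xR_te_1+[0,1]\,R_te_2$; thus at every instant the fibre at $x$ is the fibre at $0$ shifted perpendicular to itself by $xR_te_1$, a vector of magnitude $x\le 1$. The basic cost accounting is standard: translating the square parallel to the current segment direction sweeps zero area for every fibre, translating perpendicularly by a distance $d$ sweeps area $\le d$ for every fibre, whereas honest rotation is the expensive ingredient and must be organised by overlapping.

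\textbf{Reductions.} Because the square is invariant under a quarter turn, it suffices to produce a motion realising a net rotation by $\pi/2$ during which every segment that is horizontal or vertical at the initial moment sweeps at most $\varepsilon/4$, and then to concatenate four suitably rotated copies of it; an initially vertical segment is horizontal or vertical at the start of each quarter, so the rotated copy controls it there, and the four contributions sum to at most $\varepsilon$. A second, crucial reduction is that the fibre estimate is trivially uniform in $x$ during translation phases: since the square moves rigidly, the set swept by the fibre at $x$ is an exact translate, by a fixed vector, of the set swept by the fibre at $0$, and so has the same area. Consequently the entire difficulty is concentrated in the rotation phases.

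\textbf{Core construction.} I would realise the net rotation by a Perron-tree decomposition. The angular range is split into $2^n$ equal slabs; for a single needle each slab is realised by rotating about the apex of a thin triangle, the triangle being exactly the region swept, and consecutive triangles are joined by P\'al moves (translate far along the segment direction, rotate by a tiny angle, translate back), which sweep arbitrarily little area and produce no net rotation. The Perron ``sprouting'' translates the triangles so that they overlap heavily, making their union --- the total swept area --- less than $\varepsilon$ once $n$ is large. For the square each apex-rotation sweeps not a thin triangle but a thin unit-width band, and the fibre at $x$ sweeps within that slab a still thinner annular sub-band, obtained from the needle's triangle by the perpendicular shift of size $x$. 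Applying the \emph{same} translations that create the Perron overlaps, one hopes that for each fixed $x$ the bands $R_{j,x}$ overlap just as the triangles do, so that $\bigcup_j R_{j,x}$ has small area; the P\'al joins, being built from parallel translations and tiny rotations, again contribute a total area that is small and uniform across fibres. Choosing $n$ large and the P\'al displacements large would then give the quarter-turn lemma.

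\textbf{Main obstacle.} The heart of the matter --- and where the improvement over Davies lies --- is making the Perron overlap uniform over the whole continuum $x\in[0,1]$ after thickening. Davies handles finitely many parallel segments, i.e.\ finitely many values of $x$; one might hope to control a fine finite net of fibres and deduce the rest by sandwiching, but this fails, because the swept sets are unions of many thin tubes and hence have large perimeter, so passing to an $|x-x_i|$-neighbourhood can inflate the area uncontrollably. Thus the overlaps must be arranged \emph{structurally}, so that the perpendicularly shifted bands $R_{j,x}$ coincide to leading order for every $x$ simultaneously, and one must check that this survives both the thickening of each Perron triangle to a unit band and the insertion of the P\'al joins. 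Verifying this uniform overlap, with all error terms bounded independently of $x$, is the step I expect to be the main difficulty.
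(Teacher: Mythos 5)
There is a genuine gap, and you have located it yourself: the ``uniform Perron overlap'' step is not an error term to be checked but the entire content of the theorem, and in the naive form you propose it fails. The fibre at $x$ sweeps, within the slab handled at angle $\theta$, the needle's triangle shifted by $x R_\theta e_1$ --- a vector whose \emph{direction rotates with the slab}. So for fixed $x\neq 0$ the bands $R_{j,x}$ are not a common translate of the bands $R_{j,0}$: each triangle is displaced by a different vector, and the Perron sprouting, which arranges overlaps by translations along a single base line, gives no control whatsoever on $\bigcup_j R_{j,x}$. Seen dually, asking for small swept area at every height $x$ simultaneously is equivalent to a three-dimensional statement: lift each position of the needle (a line in $\{y=0\}$) to the $45^\circ$ plane through it; one then needs a union of such planes, one for every direction, whose slice by \emph{every} vertical line has measure zero. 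The Perron tree only guarantees this for the slice at height $0$, and there is no sandwiching or finite-net argument (as you correctly observe, the perimeter of the swept sets rules that out) that upgrades it. This is exactly why the problem is strictly harder than Davies' finitely-many-segments result, where only finitely many values of $x$ occur.

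The paper closes this gap by a different mechanism. It encodes the plane through $(0,0,a)$ with slopes $b,c$ by the triple $(a,b,c)$; the $45^\circ$ condition becomes $c^2-b^2=1$, and the requirement that every vertical line $(x,y,-)$ meet the union of planes in measure zero becomes $\lambda(p_{(1,x,y)}(K))=0$ for the compact code set $K$. A set with all these projections null (while $p_{(0,1,0)}(K)$ is a full interval of directions) is obtained from Talagrand's construction: nested unions of rectangles in which the oscillation of the tangent angle of the dual curves $C_{x,y,s}$ is controlled generation by generation, so that each curve meeting the small parallelograms of one generation must cross a short fixed segment, forcing all projections to be null. A Dini-type argument (the slice areas $f_\delta(h)$ of the $\delta$-thickened set are upper semicontinuous in $h$, monotone in $\delta$, and converge pointwise to $0$, hence uniformly) then yields an open set $U$ with all horizontal slices of area $<\varepsilon$ containing a neighbourhood of a tilted rectangle in every direction; compactness of $S^1$ plus P\'al joins between parallel copies (your translation accounting for this part is correct and matches the paper's) finish the proof. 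So your reductions and join bookkeeping are sound, but the core rotation step needs this structural, duality-based construction rather than Perron sprouting, and without it the argument does not go through.
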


A set in $\mathbb{R}^n$ is said to have the Kakeya property if it has two positions, between which the set can be continuously moved in an arbitrarily small volume \cite{kapo}. It is said to have the strong Kakeya property if this applies to any two of its positions.

It is easy to show that the result of Davies \cite{Da} implies that a finite union of parallel line segments possess the strong Kakeya property. In the same paper he showed that the segments must be parallel, two non-parallel segments do not have the strong Kakeya property.

Much effort has been put into characterizing sets in $\mathbb{R}^2$ with the Kakeya property and the strong Kakeya property. M. Cs{\"o}rnyei, K. H{\'e}ra, and M. Laczkovich \cite{kapo} showed that closed connected set with the Kakeya property must be a subset of a line or circle. K. H{\'e}ra and M. Laczkovich \cite{hera2016} proved that short enough circular arcs do in fact possess the strong Kakeya property. Currently the strongest version of this statement is due to A. Chang and M. Cs{\"o}rnyei \cite{korka}. They generalized the result of Davies in another direction by showing that a countable union of parallel line segments which is bounded and has finite total length can be rotated in an arbitrarily small area. It is still unknown whether all non-complete circular arcs possess the strong Kakeya property.

Let $V$ be the set of planes, which have a $45^\circ$ angle with the $\{y=0\}$ plane. In Section~\ref{s:2} we will prove the following lemma.
\begin{lemma}\label{t:folemma}
    There exists a closed set $A$ in $\mathbb{R}^3$, which is the union of planes in $V$ at a bounded distance from the origin, contains a translate of every plane in $V$, and intersects every plane of the form $\{y=t\}$ in a set of measure zero.
\end{lemma}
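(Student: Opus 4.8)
The plan is to choose coordinates and reduce the three requirements to a single construction problem for a parameter set. Taking the $y$-axis as the vertical direction, so that $\{y=0\}$ is horizontal, a short computation with normal vectors shows that every plane in $V$ is a graph $P_{\theta,c}:=\{y=x\cos\theta+z\sin\theta+c\}$ for some angle $\theta\in[0,2\pi)$ and intercept $c\in\R$, that two such planes are translates of one another exactly when they share the same $\theta$, and that the distance of $P_{\theta,c}$ from the origin is $|c|/\sqrt2$. Writing $A=\bigcup_{(\theta,c)\in S}P_{\theta,c}$ for a parameter set $S\subseteq[0,2\pi)\times\R$, the three conditions become: (i) $S$ is bounded in the $c$-coordinate; (ii) $S$ projects onto all of $[0,2\pi)$; and (iii) for every $(x_0,z_0)$ the vertical slice $A\cap\{(x_0,y,z_0):y\in\R\}$, which equals $\{x_0\cos\theta+z_0\sin\theta+c:(\theta,c)\in S\}$, is Lebesgue-null. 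I would first dispose of closedness: if $S$ is compact then $A$ is closed, since a convergent sequence $(x_n,y_n,z_n)\in P_{\theta_n,c_n}$ has, after passing to a convergent subsequence $(\theta_n,c_n)\to(\theta,c)\in S$, its limit on $P_{\theta,c}$. Thus only condition (iii) carries real content.

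There is a useful exact linearization: substituting $(a,b)=(\cos\theta,\sin\theta)$ turns the slice value $x_0\cos\theta+z_0\sin\theta+c$ into the linear functional $(a,b,c)\mapsto x_0a+z_0b+c$ evaluated on the compact set $\widehat S$ obtained by lifting $S$ to the cylinder $\{a^2+b^2=1\}$. So (iii) is precisely the demand that $\widehat S$ have null image under every functional with $c$-coefficient $1$, that is, null orthogonal projection onto every direction $(x_0,z_0,1)$ transverse to the horizontal plane, while (ii) asks that $\widehat S$ project onto the whole base circle. Several obstructions already dictate the shape of $S$. Taking $(x_0,z_0)=(0,0)$ forces $\{c:(\theta,c)\in S\}$ to be null, so all intercepts lie in a fixed null compact set $K$. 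Moreover $S$ cannot be single-valued: a compact single-valued graph over the circle is the graph of a continuous function, whose image under a nonzero functional is an interval, hence not null. So $S$ must be genuinely multivalued and, since it still projects onto a $1$-dimensional circle, it must have Hausdorff dimension at least $1$ and be purely unrectifiable (a rectifiable piece would project to positive length in almost every direction). This is exactly the regime of Besicovitch-type sets.

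I would therefore build $S$ as a self-similar limit in the spirit of the Perron-tree constructions that already underlie Davies' theorem. Starting from a coarse family of thin boxes (arc of directions)$\,\times\,$(subinterval of $K$) covering all directions at intercept $0$, I would repeatedly subdivide the arcs and vertically translate the intercept-subintervals inside a shrinking Cantor structure so as to shear apart the slice-images, doubling the number of branches while shrinking their total intercept-length at each stage; the intersection of the compact stages (or its closure) yields a compact $S\subseteq[0,2\pi)\times K$ still surjecting onto $[0,2\pi)$, so that (i), (ii) and closedness hold for free.

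The main obstacle is condition (iii) in its full, uniform strength: \emph{every} vertical line, i.e.\ every $(x_0,z_0)$, must give a null slice, not merely almost every one — the almost-everywhere statement would follow from $A$ having measure zero and is far weaker. Guaranteeing nullity simultaneously for the entire two-parameter family of sinusoidal shears $c+x_0\cos\theta+z_0\sin\theta$ is where the work lies. I would arrange the subdivision so that at each scale, and uniformly in $(x_0,z_0)$, the slice-image is covered by intervals of total length shrinking geometrically, exploiting that over each short arc the functionals run through all near-linear shears, so that a single scale-invariant estimate controls every direction at once; and then verify that passing to the closure cannot create a positive-measure slice. Taming the genuinely global, sinusoidal (rather than merely linear) nature of the shear family uniformly in $(x_0,z_0)$ is the crux of the argument.
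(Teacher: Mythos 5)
Your reduction is sound and is essentially the paper's own duality, just in different coordinates: the paper encodes the plane $z=a+bx+cy$ by the triple $(a,b,c)$, notes that membership in $V$ means $c^2-b^2=1$, and reduces the lemma to producing a compact $K$ on the hyperbolic cylinder $H$ with $p_{(0,1,0)}(K)$ an interval and $\lambda(p_{(1,x,y)}(K))=0$ for all $x,y$; your lift of $S$ to $\{a^2+b^2=1\}$ with null projections in all directions $(x_0,z_0,1)$ is the same move. One caveat on the geometry: in the paper ``vertical'' is the $z$-direction (the slice of $A$ over $(x,y)$ is $p_{(1,x,y)}(K)$), so the planes of $V$ make a $45^\circ$ angle with a plane \emph{containing} the vertical direction; the dual surface is then the hyperbolic cylinder and the shear family is $s-x\sinh t-y\cosh t$ rather than your sinusoids, and the set of directions of planes in $V$ is noncompact --- which is exactly why the paper first reduces to directions forming an interval. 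This discrepancy does not change the nature of the difficulty, and your preliminary observations (the intercept set must be null, $S$ cannot be a single-valued graph, and the everywhere statement is genuinely stronger than the almost-everywhere one) are all correct.

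The gap is that everything after the reduction is a declaration of intent, and what is missing is precisely the content of the paper's proof. In particular, your hoped-for mechanism --- ``at each scale, and uniformly in $(x_0,z_0)$, the slice-image is covered by intervals of total length shrinking geometrically,'' via ``a single scale-invariant estimate'' --- is the wrong target: at any finite stage of such a construction the slopes realized are bounded (in Talagrand's construction, $\tan\alpha_i=i\varepsilon_m/2$ climbs only to about $m$ by stage $m$), so directions whose curves are steeper than anything yet built still have slice-images of length bounded below; decay cannot be uniform in the direction at every scale, and nullity must be extracted direction by direction. The paper's actual mechanism, which your sketch does not contain, has four interlocking ingredients: (1) a slope ladder --- at stage $m$ the parallelograms are split $k_m\ge 2m/\varepsilon_m$ times so the discrete slopes $i\varepsilon_m/2$ pass every value up to $m$; (2) a reset step --- each final parallelogram is replaced by rectangles so thin (chosen via the uniform continuity \eqref{e:egyfoly}) that $\tan(\alpha_{x,y})$ oscillates by less than $\varepsilon_{m+1}/2$ on each, restarting the induction; (3) a funneling argument --- for fixed $(x,y)$, each stage-$(m-1)$ rectangle selects the unique generation $n$ with $\tan(\alpha_{x,y})\in[n\varepsilon_{m-1}/2,(n+2)\varepsilon_{m-1}/2]$, and every curve of the family meeting a generation-$(n+2)$ parallelogram must cross the $\tfrac32$-scaled segment $A'_{n+1,j}B'_{n+1,j}$, yielding $\lambda\bigl(p_{(1,x,y)}(f(K_m'\cap F))\bigr)\le 3\sqrt{1+x^2+y^2}/(2(m-1))$, a bound that only kicks in once $m$ is large relative to $x,y$ and to the maximum of $\tan(\alpha_{x,y})$ --- confirming that the estimate is per-direction, not uniform; and (4) a sign trick --- the tangency argument works on the half-plane $F$ where $\alpha_{x,y}\ge 0$, and the opposite sign is handled by conjugating with the reflection $S$ on even stages, $K'_{m+1}=S(T(S(K'_m)))$, a point your plan never confronts. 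You have correctly located the crux (uniformity over the whole two-parameter shear family), but you leave it unresolved, so the proposal is a correct setup plus an accurate description of the open difficulty rather than a proof.
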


The proof relies on a duality argument and utilizes the construction of Talagrand \cite{talagrand}, for an English description see Appendix A. in \cite{HANSEN}.

Lemma~\ref{t:folemma} is closely related to the question of rotating a square. The connection can be seen if we think of the plane as the $\{y=0\}$ plane in $\mathbb{R}^3$ and raise every vertical segment of the unit square by its distance from the origin. During the motion of the unit square, the motion of each initially vertical segment corresponds to the motion of the raised rectangle at a given height.

Thus Lemma~\ref{t:folemma} can be thought of as giving a discontinuous motion of the unit square, during which it achieves every direction. In Section~\ref{s:rot} we turn this motion into a continuous one and prove Theorem~\ref{t:mainresult}.

Section~\ref{s:sk} will discuss how Theorem~\ref{t:mainresult} can be applied to show that certain sets in $\mathbb{R}^3$ have the strong Kakeya property. 
To demonstrate the connecting idea, consider the lateral surface of a cylinder with an axis parallel to the $z$-axis. If we look at this set from the direction of the $y$-axis, then we see a subset of a square. We can apply a motion given by Theorem~\ref{t:mainresult} that rotates this square by $90^\circ$. This will define a motion on the surface as well. During the motion of the surface the area swept in the $\{ y=h \}$ plane will be at most the sum of the areas swept by the segments in this plane, which is less than $\varepsilon$ for each segment. Therefore, this motion moved the surface of the cylinder with an axis parallel to the $z$-axis to one with an axis parallel to the $x$-axis in an arbitrarily small volume. This idea applies to a much wider family of sets, not just the lateral surface of a cylinder.

We say that a compact set $K\subset \mathbb{R}^3$ is \textit{cylinderlike} if there exists $n\in \mathbb{N}$ such that for almost all $t$ the set $\{y=t\}\cap K$ can be covered by $n$ vertical lines. We say that a compact set $K$ is \emph{cylinderlike from direction $d$} if there is an orthogonal coordinate system, in which the $y$-axis is parallel to $d$, and $K$ is cylinderlike.

\begin{theorem}\label{t:fogantyu}
     If $K$ is cylinderlike from two non-parallel directions $d_1, d_2$, then $K$ has the strong Kakeya property.
\end{theorem}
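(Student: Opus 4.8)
The plan is to fix the two positions of $K$, to write the orientation-preserving isometry $g\in SE(3)$ carrying the first onto the second as $g=(\rho,v)$ with $\rho\in SO(3)$ and a translation $v\in\mathbb{R}^3$, and to move $K$ to $g(K)$ by a finite sequence of \emph{elementary moves}, each performed in volume $<\varepsilon/m$ for a suitable fixed $m$. There will be two kinds of elementary move, both available whenever the current position $K'$ is cylinderlike from a direction $d$: a rotation of $K'$ about \emph{any} axis parallel to $d$, by any angle, and a translation of $K'$ by \emph{any} vector perpendicular to $d$. Granting that each can be realised in arbitrarily small volume, the theorem reduces to three facts: rotations about two non-parallel axes $d_1,d_2$ generate $SO(3)$; the planes $d_1^{\perp}$ and $d_2^{\perp}$ together span $\mathbb{R}^3$, so translations perpendicular to $d_1$ and to $d_2$ generate all translations; and a finite sum of small volumes is small.

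The technical heart is the rotation move, and this is precisely where Theorem~\ref{t:mainresult} is needed rather than merely the theorem of Davies. Choosing coordinates with the $x$-axis parallel to $d$, cylinderlikeness says that for a.e.\ $t$ the slice $K'\cap\{x=t\}$ projects in the $(y,z)$-plane into at most $n$ lines parallel to the $z$-axis. A rotation of $K'$ about an axis parallel to $d$ is lifted from a single planar motion $N_s$ of the $(y,z)$-plane applied in every slice at once, by $\Phi_s(x,y,z)=(x,N_s(y,z))$; since $\Phi_s$ fixes the $x$-coordinate, Fubini gives that the swept volume equals $\int(\text{area swept in the slice }\{x=t\})\,dt$, and in each slice the swept set is the region swept by at most $n$ \emph{vertical} segments under $N_s$. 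Scaling the unit square of Theorem~\ref{t:mainresult} to a fixed side $R$ large enough to contain the bounded projection of $K'$ (and centring the maneuver at the desired axis), that theorem supplies one motion $N_s$ under which every vertical segment of the square — hence every segment occurring in any slice — sweeps area at most $R^{2}\varepsilon$; stopping $N_s$ at the appropriate time realises an arbitrary angle. Thus each slice sweeps at most $nR^{2}\varepsilon$, and integrating over the bounded range of $t$ bounds the total volume by a constant times $\varepsilon$. The crucial point, and the reason Davies's theorem for finitely many segments does not suffice, is that a \emph{single} motion must simultaneously control the \emph{continuum} of segments occurring across all slices.

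For the translation move I use the same lifting together with a classical device. The halfway stage of the full rotation of Theorem~\ref{t:mainresult} is a $180^{\circ}$ rotation about some point whose centre can be prescribed by translating the whole maneuver, and it is swept in area at most $R^{2}\varepsilon$ per segment; moreover it returns every segment to a vertical orientation. Composing two such half-rotations, about points $p_1$ and $p_2$, yields the planar map $x\mapsto x+2(p_2-p_1)$, an arbitrary translation of the slice plane, while keeping all segments vertical (so the second maneuver, re-centred and enlarged to cover their displaced positions, again applies). Lifting slicewise as before, this realises an arbitrary translation of $K'$ perpendicular to $d$ in volume $O(\varepsilon)$.

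It remains to order the moves so that at each stage the current position is cylinderlike from the axis about which I rotate. Because ``cylinderlike from $d$'' is covariant — an isometry $h$ carries a set cylinderlike from $d$ to one cylinderlike from $h(d)$ — I first reach $\rho$ with a common point $O$ fixed: writing $\rho=R_{d_{i_1}}(\theta_1)\cdots R_{d_{i_m}}(\theta_m)$ as a product of rotations about $d_1,d_2$ through $O$, I realise them by rotating the current set $\sigma_k(K)$ about the image axis $\sigma_k(d_{i_{k+1}})$, a direction it is cylinderlike from, bringing $K$ to $\rho(K)$. Finally I translate by $v$: since translations fix every direction, $\rho(K)$ stays cylinderlike from $\rho(d_1)$ and $\rho(d_2)$ throughout, so writing $v=v'+v''$ with $v'\perp\rho(d_1)$ and $v''\perp\rho(d_2)$ and applying the two perpendicular translations in turn reaches $v+\rho(K)=g(K)$. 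I expect the main obstacle to be the rotation move — converting the per-segment area bound of Theorem~\ref{t:mainresult} into a volume bound uniform over the continuum of slices — with a secondary bookkeeping difficulty in keeping the moving frame aligned with a cylinderlike direction at every stage.
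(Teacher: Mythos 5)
Your proof is correct, and its technical core coincides with the paper's: you lift the single planar motion of Theorem~\ref{t:mainresult} slicewise via $\Phi_s(x,y,z)=(x,N_s(y,z))$ and apply Fubini to rotate a cylinderlike set about its distinguished direction in volume $O(n\varepsilon)$ (measure-zero exceptional slices contributing nothing) --- this is exactly the paper's Lemma~\ref{t:cylinderlike} --- and you correctly identify why a single motion controlling a continuum of segments, rather than Davies's finitary result, is indispensable. You diverge in the two auxiliary steps. For translations the paper uses P\'al joins (translate far in the free vertical direction at zero cost, rotate by a tiny angle, translate back, undo the rotation), while you compose two lifted half-turns, using $R_{p_2,\pi}\circ R_{p_1,\pi}=T_{2(p_2-p_1)}$, each extracted from the motion of Theorem~\ref{t:mainresult} by stopping when the rotational part equals $\pi$ (which exists by continuity, and whose centre can indeed be prescribed by translating the maneuver, since a direct planar isometry with rotation angle $\pi$ is a point symmetry); both devices work, and yours avoids the limiting ``small angle'' argument at the price of these two small verifications. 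For the final assembly the paper proves a bespoke two-case lemma about steering a pair of needles on $S^2$ by rotations about their \emph{current} positions, whereas you invoke the classical fact that rotations about two non-parallel axes generate $SO(3)$ (finiteness of the word being automatic from the meaning of ``generate'') and convert the word $\rho=A_1\cdots A_m$ into admissible moves via the conjugation identity $R_{\sigma_k(d_{i_{k+1}})}(\theta_{k+1})\,\sigma_k=\sigma_k A_{k+1}=\sigma_{k+1}$, so that each rotation is about a direction the current set is cylinderlike from; this is logically equivalent to the paper's needle argument, trading its self-contained spherical geometry for a standard group-theoretic fact plus covariance bookkeeping, which you carry out correctly. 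Your ordering (realise $\rho$ first, then split $v=v'+v''$ with $v'\perp\rho(d_1)$, $v''\perp\rho(d_2)$, possible since two distinct planes through the origin span $\mathbb{R}^3$) likewise mirrors the paper's initial reduction, done there by P\'al joins, to a rotation fixing the origin.
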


It follows that the union of a finite number of parallel lateral cylinder surfaces possesses the strong Kakeya property. The above theorem also implies that a compact subset of any two planes has the strong Kakeya property.

More general examples of such sets are given at the end of Section~\ref{s:sk}.

\section{Planes of certain angle, proof of Lemma~\ref{t:folemma}}\label{s:2}
This section will exclusively deal with the proof of Lemma~\ref{t:folemma}.

Let $a\cdot b$ denote the standard dot product of $a$ and $b$.
For a vector $v$ let \[p_v(X)~=~\{x \cdot v~|~x~\in~X\}.\]

It is clear that it is sufficient to construct a set fulfilling the conditions, with the exception that it only contains planes in $V$, whose directions form an interval.

We will use a duality argument and encode planes with points of $\mathbb{R}^3$. Similar encodings are often used in planar cases, see for examples \cite{tomi},\cite{falconer}. To the point $(a,b,c)$ assign the plane, which contains $(0,0,a)$, has slope $b$ in the $y=0$ plane, slope $c$ in the $x=0$ plane. It is easy to see that a point $(x,y,z)$ is on the plane corresponding to $(a,b,c)$ if and only if $a+bx+cy=z$. Through straightforward calculation a plane is in $V$ when its triple has $c^2 - b^2 = 1$.

\subsection{Requirements for our codeset and some basic observations} Let $K$ be a set of triples, and $A$ the union of the corresponding planes. If $K$ is compact, then $A$ is closed and every plane is at a bounded distance from the origin. The point $(x,y,z)$ is in $A$ if $K$ contains a triple $(a,b,c)$, which satisfies $a+bx+cy=z$, meaning $K$ has a point in this plane. The set $(x,y, - ) \cap A$ is $p_{(1,x,y)}(K)$, hence their measure is equal. Hence if $K$ is such that for all vectors $v$ of the form $(1,x,y)$ we have $\lambda(p_v(K)) = 0$, then $A$ intersects every vertical line in a set of measure 0. Therefore, it intersects every plane $\{y=t\}$ in a set of measure 0. For the directions of the planes in $A$ to form an interval, we need $p_{(0,1,0)}(K)$ to be an interval.

Let the 3 coordinates of the space containing $K$ be $a,b,c$. By the above argument in order to prove Lemma~\ref{t:folemma}, it is enough to prove the following lemma.

Let $H= \{(a,b,c) | c^2 - b^2 = 1, c > 0\}$.

\begin{lemma}\label{t:letK}
    There exists a compact set $K \subset H$ such that $ p_{(0,1,0)}(K)$ is an interval and $ \lambda(p_{(1,x,y)})(K) = 0$ for all $x,y$.
\end{lemma}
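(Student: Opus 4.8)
The plan is to reduce the statement to a planar Besicovitch-type construction and then build $K$ by an iterated \emph{split-and-slide} argument in the dual (code) picture, following the scheme of Talagrand \cite{talagrand}. First I would parametrise $H$ by its two free coordinates $(a,b)$, writing $c=\sqrt{1+b^2}$ for the branch $c>0$, so that specifying a compact $K\subset H$ is the same as specifying a compact set $\widehat K$ in the $(a,b)$-plane. Under this identification $p_{(0,1,0)}(K)=\{b:(a,b)\in\widehat K\}$ and $p_{(1,x,y)}(K)=\varphi_{x,y}(\widehat K)$, where $\varphi_{x,y}(a,b)=a+bx+\sqrt{1+b^2}\,y$. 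Hence the two requirements become: (i) the projection of $\widehat K$ onto the $b$-axis is an interval, and (ii) $\lambda(\varphi_{x,y}(\widehat K))=0$ for every $(x,y)$.

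Two observations fix the shape of the object to be built. Since a compact planar set of positive area projects onto a set of positive length in every direction, condition (ii) (already for $y=0$) forces $\widehat K$ to have zero area; thus $\widehat K$ must be a null set that still projects onto a full interval in $b$, i.e.\ a Besicovitch/Nikodym-type set. Moreover the obvious candidates fail: if $\widehat K$ were a graph $a=g(b)$ over $[0,L]$ then $\varphi_{x,y}(\widehat K)$ would be the continuous image of an interval, hence of positive length unless $g(b)+bx+\sqrt{1+b^2}\,y$ were constant in $b$ — impossible for all $(x,y)$ — and the same obstruction recurs whenever a fibre of $\widehat K$ contains a $b$-subinterval. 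So the $b$-fibres must be Cantor-like and positioned so that the $b$-dependent shifts $bx+\sqrt{1+b^2}\,y$ make them overlap, for every $(x,y)$ simultaneously.

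The heart of the matter is then the iterated construction: I would realise $\widehat K=\bigcap_m\widehat K_m$ as a decreasing sequence of finite unions of thin pieces fibred over $[0,L]$, where at each stage the pieces are subdivided in $b$ and translated in $a$ so as to force overlaps in the images $\varphi_{x,y}(\widehat K_m)$ and drive $\lambda(\varphi_{x,y}(\widehat K_m))$ to $0$ — a Perron-tree argument transported to the code space. The feature that allows this to be arranged for the entire two-parameter family of directions at once is that the shift profile $b\mapsto\bigl(b,\sqrt{1+b^2}\bigr)$ is strictly convex, so the relevant geometric overlap estimates degrade controllably and do not collapse (as they would for a degenerate, e.g.\ linear-through-the-origin, profile, where the $y$-parameter could be absorbed into $x$ and the problem would reduce to the classical one-parameter case). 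Securing this \emph{uniformity over $(x,y)$} — a single compact $\widehat K$ that annihilates every projection while its $b$-projection remains all of $[0,L]$ — is the main obstacle, and it is exactly what Talagrand's construction \cite{talagrand} provides; I would apply it to the convex profile $\sqrt{1+b^2}$ and pass to the limit, after which $K=\{(a,b,\sqrt{1+b^2}):(a,b)\in\widehat K\}$ is the compact subset of $H$ required by the lemma.
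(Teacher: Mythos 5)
Your reduction is exactly the paper's: parametrise $H$ by $(a,b)\mapsto(a,b,\sqrt{1+b^2})$, so that the lemma becomes a statement about a planar compact set $\widehat K$ with full $b$-projection and $\lambda(\varphi_{x,y}(\widehat K))=0$ for the two-parameter family $\varphi_{x,y}(a,b)=a+bx+y\sqrt{1+b^2}$, and the paper likewise takes $\widehat K$ to be Talagrand's set. But there is a genuine gap at the decisive step: you cannot simply ``apply Talagrand's construction to the convex profile $\sqrt{1+b^2}$ and pass to the limit.'' Talagrand's theorem \cite{talagrand} concerns a different family of curves (circles, in the context of Littlewood's one-circle problem), so what his paper supplies here is only the construction of the set, not the conclusion. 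The substantive content of the paper's proof is the verification that this set annihilates the present family, and your sketch replaces that verification with the assertion that ``the overlap estimates degrade controllably,'' which is precisely what has to be proved.

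Concretely, the paper's verification runs as follows, and none of it appears in your proposal. Fix $(x,y)$ and consider the level curves $C_{x,y,s}=\{(s-x\sinh t-y\cosh t,\sinh t):t\in\mathbb{R}\}$ of $\varphi_{x,y}$; their tangent slope at a point depends only on $b$ (it equals $-x-yb/\sqrt{1+b^2}$) and is monotone in $b$, so the set $F$ where the signed angle $\alpha_{x,y}$ is nonnegative is a half-plane bounded by a line parallel to the $a$-axis. This is what allows intersecting $F$ with the stage-$(m-1)$ rectangles, and it is also why the construction must alternate with the reflection $S$ about $\{b=1/2\}$ at even steps, to handle the negative-angle half-plane --- a point your sketch omits entirely. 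One then chooses $m$ so large that on each rectangle $R$ of $K'_{m-1}\cap F$ the values of $\tan\alpha_{x,y}$ lie in a band $\left[n\varepsilon_{m-1}/2,\,(n+2)\varepsilon_{m-1}/2\right]$ matching the Perron-tree angles $\tan\alpha_i=i\varepsilon_{m-1}/2$ of the parallelogram generations inside $R$; the key geometric fact is that any curve $C_{x,y,s}$ meeting one of the generation-$(n+2)$ parallelograms must cross the segment obtained by scaling $A_{n+1,j}B_{n+1,j}$ by $3/2$, of length $2^{-(n+1)}3\varepsilon_{m-1}/N$, and summing over $j$ and over the rectangles yields $\lambda(p_{(1,x,y)}(f(K'_m\cap F)))\le 3\sqrt{1+x^2+y^2}/(2(m-1))$. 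Finally, the uniformity over $(x,y)$ that you attribute to strict convexity of the profile is in fact engineered into the construction: at each stage the parameter $N'$ must be chosen, via uniform continuity of $\tan\alpha_{x,y}$ on compacta, so that the oscillation of $\tan\alpha_{x,y}$ on each new rectangle is below $\varepsilon_{m+1}/2$ for all $|x|,|y|\le m$. Without this band-matching argument your plan is a correct frame with its central estimate missing.
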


The remainder of this section deals with the proof of Lemma \ref{t:letK}.

\subsection{Preliminaries for constructing $K$}

Let $f: \mathbb{R}^2 \rightarrow H$, $f(a,b)= (a,b, \sqrt{1+b^2})$.

We will construct $K' \subset \mathbb{R}^2$ such that $K= f(K')$ has the desired properties.

For any $x,y,s$ we have $s \in p_{(1,x,y)}(K)$ if and only if $K$ has a point $r$, for which
\[r\cdot(1,x,y) = s.\]
Such points $r$ on $H$ form a curve: 
\[c_{x,y,s}= \{(s-x\sinh(t)-y\cosh(t),\sinh(t),\cosh(t)):t \in \mathbb{R}\}.\]
Define
\[ C_{x,y,s}= \{(s-x\sinh(t)-y\cosh(t),\sinh(t)):t \in \mathbb{R}\}.\]
Then
\[s \in p_{(1,x,y)}(K) \iff (K \cap c_{x,y,s} \neq \varnothing) \iff  (K' \cap C_{x,y,s} \neq \varnothing).\]

For each $x,y$ we define a function $\alpha_{x,y}$ on the plane: To calculate $\alpha_{x,y}(a,b)$, take the curve $C_{x,y,s}$ through $(a,b)$ and take the slope of the tangent of this curve at $(a,b)$ with respect to the $b$ axis. Observe that $\alpha_{x,y}$ always exists and is never $\pm \infty$. The function $\alpha_{(x,y)}(a,b)$ is continuous as a function of $x,y,a,b$. Thus, it is uniformly continuous on compact sets. Applying this to $[0,G]\times[0,G]\times[0,1]\times[0,1]$ we obtain

\begin{equation}\label{e:egyfoly}
\begin{aligned}
    \forall G \in \R \forall \varepsilon  \exists \delta \,& \forall r_1, r_2 \in [0, 1]^2, \, |x|, |y| \leq G, \, |r_1 - r_2| < \delta \\
    &\implies \left| \alpha_{x,y}(r_1) - \alpha_{x,y}(r_2) \right| < \varepsilon.
\end{aligned}
\end{equation}

\subsection{Construction of $K'$}

We will prove that the set constructed by Talagrand \cite{talagrand} (see also in \cite{HANSEN}) is a suitable set. For the sake of completeness, we repeat the construction.

By induction, we shall construct a decreasing sequence $(K_m')$ of finite unions of closed rectangles such that $K'=\cap_{m=1}^{\infty} K_m'$ has the desired properties. To begin with let \( \varepsilon_1 = 1 \) and \( K_1' = [0, 1]^2 \). 
We now suppose that \( m \in \mathbb{N}, 0 < \varepsilon_m \leq 1/m \), and that \( K_m' \) is a union of rectangles 
\[
R_n = [a_n, a_n + \frac{\varepsilon_m}{N}] \times \left[ \frac{n - 1}{N}, \frac{n}{N} \right], \quad 1 \leq n \leq N,
\]
where $N \in \mathbb{N}^+$, \( a_1, \dots, a_N \in \mathbb{R} \). We will now construct $K_{m+1}'$.

We fix \( 1 \leq n \leq N \), define \( P_{0,1} = R_n \), and choose \( k_m \in \mathbb{N} \) such that \( k_m \geq \frac{2m}{\varepsilon_m} \). Starting with \( P_{0,1} \), we construct parallelograms \( P_{i,j}, 1 \leq i \leq k_m, 1 \leq j \leq 2^i \). We take the midpoint of every side of $P_{i-1,j}$, let the midpoint of the left side be $A_{i,j}$, and the midpoint of the right side $B_{i,j}$. Take the midpoint of $A_{i,j}B_{i,j}$ and let the parallelograms $P_{i,2j-1}$, $P_{i,2j}$ be as indicated by Fig. \ref{fig:paragyartas}. Let $\alpha_i$ denote the slope corresponding to the highlighted angle of Fig. \ref{fig:paragyartas}.

\begin{figure}
    \centering
    \includegraphics[width=1\linewidth]{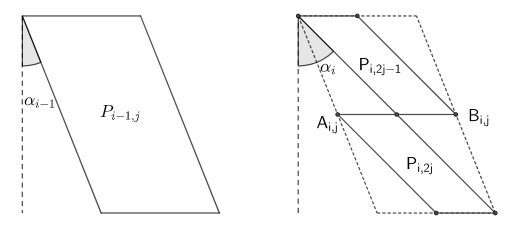}
    \caption{From $P_{i-1,j}$ to $P_{i,2j-1}$ and $P_{i,2j}$.}
    \label{fig:paragyartas}
\end{figure}

 It follows by induction from the construction that the two intervals \( p_{(0,1)}(P_{i,2j-1}) \) and \( p_{(0,1)}(P_{i,2j}) \) have length \( 2^{-i}/N \) and that \( |A_{i,j} - B_{i,j}| = 2^{-i+1}\varepsilon_m/N \). Therefore, in the last step also using that $k_m\ge \frac{2m}{\varepsilon_m}$, we obtain

\[
 \alpha_i = \frac{\varepsilon_m}{2} +  \alpha_{i-1},
\]
\begin{equation}\label{eq:szog}
    \implies \alpha_i = \frac{i\varepsilon_m}{2},
\end{equation}
\[
\implies \alpha_{k_m} \ge m.
\]

Next we shall replace each \( P_{k_m,j} \) by a union of rectangles \( Q_{k_m,j} \) (see Fig.~$\ref{fig:negyzetpak}$) such that \( p_{(0,1)}(Q_{k_m,j}) = p_{(0,1)}(P_{k_m,j}) \) and their union \( T(K_m') \) satisfies \[\lambda(p_{(1,0)}(T(K_m'))) \leq\frac{1}{m+1} .\] To that end we choose a suitable multiple \( N' \) of \( 2^{k_m} N \), an \( \varepsilon_{m+1} \in (0, \frac{1}{m+1})) \), which is sufficiently small, and replace each of the parallelograms \( P_{k_m,j}, 1 \leq j \leq 2^{k_m} \), by a subset \( Q_{k_m,j} \), which is a union of rectangles of the form \\
\([t,~t~+~\varepsilon_{m+1}/N']~\times~[u,~u~+~1/N']\) as indicated by Fig. 2. Using \eqref{e:egyfoly} we can choose $N'$ to be large enough, such that the oscillation of $\alpha_{x,y}$ is less than $\frac{\varepsilon_{m+1}}{2}$ in each rectangle, whenever $x,y < m$.

\begin{figure}
    \centering
    \includegraphics[width=1\linewidth]{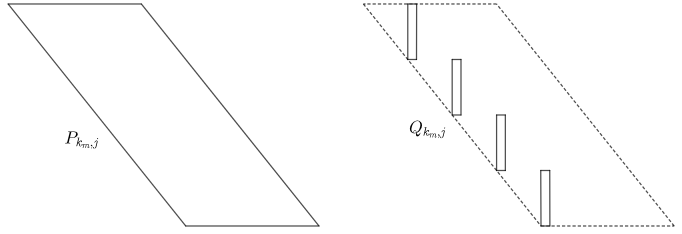}
    \caption{From $P_{k_m,j}$ to $Q_{k_m,j}$.}
    \label{fig:negyzetpak}
\end{figure}

If \( m \) is odd, let \( K_{m+1}' = T(K_m') \). If \( m \) is even, we take \( K_{m+1}' = S(T(S(K_m'))) \), where \( S \) denotes the reflection about the line \(\{b = 1/2\}\).

\subsection{Showing that $K$ is suitable}
Recall that $K = f(K')$.
It is clear that $K \subset H$ and $p_{(0,1,0)}(K)$ is an interval, so it remains be proved that $\lambda(p_{(1,x,y)}(K))=0$ for any $x,y$. We now fix $x,y$.
Let $F \subset \mathbb{R}^2$ be the set on which $\alpha_{x,y}$ is non-negative. It is easy to verify that $F$ is a half-plane with a defining line parallel to the $a$-axis, or the whole plane or possibly the empty set. We prove that
\begin{equation}\label{l:kff}
    \lambda(p_{(1,x,y)}(K \cap f(F))=0.
\end{equation}

The reasoning is similar if $\alpha_{x,y}$ is negative, in that case we need to use that for even $m$ we have \( K_{m+1}' = S(T(S(K_m'))) \). Thus \eqref{l:kff} implies that $K$ is a suitable set.

The following claim clearly implies \eqref{l:kff}, therefore it will complete the proof of Lemma \ref{t:letK} and thus the proof of Lemma \ref{t:folemma}.
\begin{claim}
    If m is large enough and even, then \[\lambda(p_{(1,x,y)}(f(K_m' \cap F)) \le \frac{3\sqrt{1+x^2+y^2}}{2(m-1)}.\]
\end{claim}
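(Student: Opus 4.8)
The plan is to read the projection off the level curves $C_{x,y,s}$ and to reduce the claim to a sheared version of the estimate $\lambda(p_{(1,0)}(T(K_{m-1}')))\le \frac1m$ that the construction already guarantees. First I would record that, by the definition of $f$, we have $p_{(1,x,y)}(f(a,b)) = a + xb + y\sqrt{1+b^2}=:g_{x,y}(a,b)$, so that $p_{(1,x,y)}(f(K_m'\cap F)) = g_{x,y}(K_m'\cap F)$ and the level set $\{g_{x,y}=s\}$ is exactly the curve $C_{x,y,s}$, whose slope at $(a,b)$ equals $\tan\alpha_{x,y}(a,b)$. On $F$ this slope is nonnegative, and everywhere it satisfies $\abs{\tan\alpha_{x,y}}\le \abs{x}+\abs{y}$; the factor $\sqrt{1+x^2+y^2}=\abs{(1,x,y)}$ in the target bound is nothing more than the length of the vector defining the (unnormalised) projection, so after dividing by it the content of the claim is that the projection onto the \emph{unit} vector $(1,x,y)/\abs{(1,x,y)}$ has measure at most $\tfrac{3}{2(m-1)}$.

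Next I would exploit the recursive structure. For even $m$ the set $K_m'$ is the output $T(K_{m-1}')$ of the previous (odd) round, hence a packing of clusters $Q_{k_{m-1},j}$: each cluster is a union of thin axis-parallel rectangles filling a parallelogram of slant $\tan\alpha_{k_{m-1}}\ge m-1$ (by \eqref{eq:szog}) with \emph{positive} sign, and the defining property of $T$ is that this packing overlaps in the first coordinate so that $\lambda(p_{(1,0)}(K_m'))\le \frac1m$. This is exactly where the parity of $m$ enters: evenness guarantees that the slant of the clusters and the slope $\tan\alpha_{x,y}\ge 0$ of the level curves on $F$ have the same sign; the complementary region where $\alpha_{x,y}<0$ is treated symmetrically using the reflected step $K_{m+1}'=S(T(S(K_m')))$.

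The third step straightens the level curves by the area-preserving shear $\Phi(a,b)=(a+xb+y\sqrt{1+b^2},\,b)$, under which $g_{x,y}$ becomes the projection onto the first coordinate, so that $\lambda(g_{x,y}(K_m'\cap F))=\lambda(\pi_1(\Phi(K_m'\cap F)))$. On each cluster $\Phi$ is almost a genuine linear shear: the cluster has tiny extent in $b$, and by \eqref{e:egyfoly} (realised in the construction through the choice of $N'$) the oscillation of $\tan\alpha_{x,y}$ across it is at most $\varepsilon_m/2$ once $\abs{x},\abs{y}<m-1$, so $b\mapsto xb+y\sqrt{1+b^2}$ is nearly affine there with slope $-\tan\alpha_{x,y}$. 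Hence $\Phi$ reduces every slant by the nonnegative amount $\tan\alpha_{x,y}$, turning the slant-$\tan\alpha_{k_{m-1}}$ parallelograms into slant-$(\tan\alpha_{k_{m-1}}-\tan\alpha_{x,y})$ ones; since $\tan\alpha_{k_{m-1}}-\tan\alpha_{x,y}\ge (m-1)-(\abs{x}+\abs{y})>0$ for $m$ large, $\Phi(K_m'\cap F)$ is, up to a negligible perturbation, again a Perron tree, only with all slants reduced and kept positive. I would then re-run on this sheared tree the telescoping overlap estimate that yields $\lambda(p_{(1,0)}(T(\cdot)))\le\frac1m$ in the unsheared case, keeping track of the extra width of each cluster and the error from the nonlinearity of $\Phi$, and restore the factor $\abs{(1,x,y)}$ to obtain $\tfrac{3\sqrt{1+x^2+y^2}}{2(m-1)}$.

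The main obstacle is precisely this last step: showing that the overlap responsible for the smallness of the first-coordinate projection survives $\Phi$. The danger is twofold, since $\Phi$ is genuinely nonlinear through the $\sqrt{1+b^2}$ term and since clusters sitting at different heights $b$ are translated by the different amounts $xb+y\sqrt{1+b^2}$, either of which could in principle pull the images apart and defeat the packing's overlap. The decisive point is that the level-curve slope is bounded by $\abs{x}+\abs{y}$, which is strictly smaller than the slant $m-1$, so the shear is never strong enough to misalign the clusters; combined with the oscillation control from \eqref{e:egyfoly}, which makes $\Phi$ behave like a true linear shear over the minuscule $b$-extent of each cluster, this guarantees that reducing all slants by the common amount $\tan\alpha_{x,y}$ preserves the generation-by-generation overlaps, so the original estimate transfers to the sheared tree with only a bounded loss in the constant.
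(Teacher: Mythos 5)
Your setup (reading the projection off the level curves $C_{x,y,s}$, whose slope at height $b$ is $\tan\alpha_{x,y}=-x-yb/\sqrt{1+b^2}$, and using the oscillation control \eqref{e:egyfoly} on each rectangle) matches the paper, but the core of your argument --- transferring the overlap estimate $\lambda(p_{(1,0)}(T(K_{m-1}')))\le \tfrac1m$ through the straightening map $\Phi$ --- has a genuine gap, and it is exactly the one you flag at the end. The smallness of the vertical projection of $T(K_{m-1}')$ is a statement about coincidences of the horizontal positions of cells sitting at \emph{different} heights, and those positions were chosen in the construction for the vertical direction only. Under $\Phi$, two cells whose heights differ by $\Delta b$ are displaced relative to each other by roughly $\tan\alpha_{x,y}\cdot\Delta b$, so whether their projections still overlap is governed by comparing this displacement with the cell width $\varepsilon_m/N'$ --- not by comparing $\tan\alpha_{x,y}$ with the slant $\tan\alpha_{k}\ge m-1$. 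Since $|\tan\alpha_{x,y}|$ can be of size $|x|+|y|$ while the cells have width $\varepsilon_m/N'$, these relative shifts dwarf the cell widths, and your ``decisive point'' (slope smaller than slant) is a non sequitur: it shows the sheared parallelograms still lean the right way, but says nothing about their projections continuing to coincide. Two further problems: $\tan\alpha_{x,y}$ varies by order $|y|$ across the unit height, so there is no single sheared Perron tree --- each strip is sheared by a different amount and translated by the nonlinear amount $xb+y\sqrt{1+b^2}$, destroying any cross-strip overlaps the $T$-estimate exploits; and your assertion that all slants are ``kept positive'' after shearing fails, since the generation-$i$ slant becomes $(i-n)\varepsilon_{m-1}/2$, which is negative for $i<n$.

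The paper's proof avoids all of this by never invoking the bound $\lambda(p_{(1,0)}(T(\cdot)))\le\tfrac1{m+1}$ at all. Instead it uses an angle-matching, or funneling, argument: on each rectangle $R$ of $K_{m-1}'\cap F$ one picks $n$ with $\tan\alpha_{x,y}\in[n\varepsilon_{m-1}/2,(n+2)\varepsilon_{m-1}/2]$ on $R$ (possible by \eqref{e:egyfoly}, with evenness of $m$ ensuring the curve slopes on $F$ and the branch slopes \eqref{eq:szog} have the same sign, as you correctly noted). Because the curve's slope is sandwiched between the slopes $\tan\alpha_n$ and $\tan\alpha_{n+2}$ of the relevant sides, every curve $C_{x,y,s}$ meeting one of the four parallelograms $P_{n+2,4j-i}$, $i=0,1,2,3$, must cross the focal segment $A'_{n+1,j}B'_{n+1,j}$ (the segment $A_{n+1,j}B_{n+1,j}$ scaled by $3/2$). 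Since $K_m'\cap R$ lies in the union of these parallelograms, the projection is bounded by the total length of the $2^n$ focal segments times $|(1,x,y)|$, namely $2^n\cdot 2^{-(n+1)}\cdot 3\varepsilon_{m-1}/N\cdot\sqrt{1+x^2+y^2}$ per rectangle, which sums over the $N$ rectangles to $\tfrac{3\varepsilon_{m-1}}{2}\sqrt{1+x^2+y^2}\le\tfrac{3\sqrt{1+x^2+y^2}}{2(m-1)}$. This local, per-generation mechanism is what produces the explicit constant $3$ (from the $3/2$-scaling), which your approach, even if its transfer step could be repaired, would only recover up to an unquantified loss. To fix your proposal you would essentially have to abandon the shear-transfer and prove the funneling statement directly, at which point you have reproduced the paper's argument.
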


\begin{proof}
    Let $M$ be the maximum of $\alpha_{x,y}$ on $[0,1]^2$. We can choose $m$ to be larger than $x+2$ and $y+2$.
    % Let $M$ be the maximum of $\tan(\alpha_{x,y})$ on $[0,1]^2$. We can choose $m$ to be large enough, such that $x,y<m$ and $n+2 < k_m$ for all $n < \frac{2M}{\varepsilon_{m-1}}$, since $k_m > \frac{2m}{\varepsilon_{m-1}}$.
    The set $(K_{m-1}'\cap F)$ is a union of rectangles, this follows from the observation made on the possible shapes of $F$. Let $R$ be one of these rectangles. The oscillation of $\alpha_{x,y}$ on $R$ is less than $\frac{\varepsilon_{m-1}}{2}$, since $x,y < m -2$. Let $n$ be such that 
    \[\alpha_{x,y} \subset \left[\frac{n\varepsilon_{m-1}}2, \frac{(n+2)\varepsilon_{m-1}}2\right]\]
    on $R$. For large enough $m$ we have $n+2 < k_{m-1}$, since $k_{m-1} > \frac{2(m-1)}{\varepsilon_{m-1}}$ and $n < \frac{2M}{\varepsilon_{m-1}}$.

\begin{figure}
    \centering
    \includegraphics[width=0.75\linewidth]{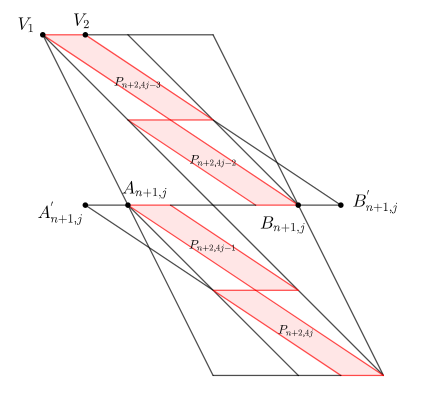}
    \caption{Two generations of parallelograms.}
    \label{fig:2gen}
\end{figure}

    Let $A_{n+1,j}' B_{n+1,j}'$ be the segment we obtain by scaling $A_{n+1,j}B_{n+1,j}$ by $\frac{3}{2}$ from its midpoint (Fig.~$\ref{fig:2gen}$). We claim that every curve $C_{x,y,s}$ intersecting $P_{n+2,4j-i}$, where $i = 0,1,2,3$ must intersect the segment $A_{n+1,j}^{'}B_{n+1,j}^{'}$. This can be verified for each parallelogram separately.
    We will only check this for $P_{n+2,4j-3}$, the others can be done similarly. Suppose $C_{x,y,s}$ intersects $P_{n+2,4j-3}$. We claim that if it intersects the line of $A_{n+1,j}^{'}B_{n+1,j}^{'}$ right of $B_{n+1,j}'$, then it must be right of the segment $B_{n+1,j}' V_2$ in the horizontal strip between $B_{n+1,j}'$ and $ V_2$. Indeed, this follows from the fact that by \eqref{eq:szog} the segment $B_{n+1,j}' V_2$ has a slope of $\frac{(n+2)\varepsilon_{m-1}}2$ with respect to the $b$-axis, since the tangent of the curve has a smaller slope on $R$. On the left side, a slightly stronger statement is true. If the curve intersects the line of $A_{n+1,j}^{'}B_{n+1,j}^{'}$ left of $A_{n+1,j}$, then it must be left of the segment $A_{n+1,j}V_1$ in the horizontal strip between $A_{n+1,j}$ and $V_1$ (this segment has a slope of $\frac{n\varepsilon_{m-1}}2$ with respect to the $b$-axis). 
    
    The segment $A_{n+1,j}^{'}B_{n+1,j}^{'}$ is of length $\frac{2^{-(n+1)}3\varepsilon_{m-1}}{N}$. Note that $f(A_{n+1,j}^{'}B_{n+1,j}^{'})$ is a segment of the same length.
    This means that
    \begin{equation*}
    \begin{aligned}
        \lambda(p_{(1,x,y)}(f( \cup_{i=0}^{3}& P_{n+2,4j-i}))) \le \lambda(p_{(1,x,y)}(f(A_{n+1,j}^{'}B_{n+1,j}^{'}))) \le   \\
        &\le \frac{2^{-(n+1)}3\varepsilon_{m-1}}{N}\sqrt{1+x^2+y^2},
    \end{aligned}
    \end{equation*}

\[\lambda(p_{(1,x,y)}(f(R\cap K_m'))) \le \sum_{j=1}^{2^{n}}\lambda(p_{(1,x,y)}(f( \cup_{i=0}^{3} P_{n+2,4j-i}))) \le \frac{3\varepsilon_{m-1}}{2N}\sqrt{1+x^2+y^2},\]

\begin{equation*}
\begin{aligned}
    &\lambda(p_{(1,x,y)}(f(K_{m}'\cap F))) \le \sum_{i=1}^N \lambda(p_{(1,x,y)}(f(R_i))) \le \\
    &\le \frac{3\varepsilon_{m-1}}{2}\sqrt{1+x^2+y^2} \le \frac{3\sqrt{1+x^2+y^2}}{2(m-1)}.
\end{aligned}
\end{equation*}

\end{proof}

\section{Rotating a square, the proof of Theorem~\ref{t:mainresult}}\label{s:rot}
\subsection{Neighbourhoods of $A$}
Call a rectangle in $\mathbb{R}^3$ $\textit{interesting}$ if its sides are of length $1$ and $\sqrt{2}$, one of the shorter sides lies in the plane $\{y=0\}$, it has a $45^\circ$ angle with the $\{ y=0\}$ plane and is in the $y \ge 0$ half-space. By the \textit{neighbourhood of an interesting  rectangle} we will mean a set that contains the rectangle and is relatively open in $\mathbb{R}\times [0,1]\times \mathbb{R}$.
\begin{claim}\label{c:lu}
    For every $\varepsilon > 0$ there exists a set $U$ such that for every possible direction $U$ contains a neighbourhood of an interesting rectangle in that direction, while it intersects the $\{y=h\}$ plane in a set of measure less than $\varepsilon$ for all $h$.
\end{claim}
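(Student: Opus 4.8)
The plan is to build the set $U$ by thickening the closed set $A$ from Lemma~\ref{t:folemma}. Recall that $A$ is a union of planes in $V$, contains a translate of every plane in $V$, and meets every vertical line in a measure-zero set. Since an interesting rectangle lies in a plane making a $45^\circ$ angle with $\{y=0\}$, and $A$ contains a translate of every such plane, for each admissible direction I can locate inside $A$ a plane on which an interesting rectangle of that direction sits. First I would fix $\varepsilon>0$ and pass to the slab $\mathbb{R}\times[0,1]\times\mathbb{R}$, where the relevant rectangles live. The key measure-theoretic input is that $A\cap(\mathbb{R}\times[0,1]\times\mathbb{R})$ intersects each plane $\{y=h\}$, for $h\in[0,1]$, in a set of measure zero: indeed, $A$ meeting every vertical line in measure zero translates, via Fubini on the $\{y=h\}$ plane, into each horizontal slice being a measure-zero (in fact lower-dimensional) subset of that plane.

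The main step is an outer-regularity argument. For each $h\in[0,1]$ the slice $A_h=A\cap\{y=h\}$ has two-dimensional measure zero, so for any target I can enclose $A_h$ in a relatively open planar set of small measure. The subtlety is that I need a single open set $U\subset\mathbb{R}\times[0,1]\times\mathbb{R}$ whose every horizontal slice has measure $<\varepsilon$ simultaneously, uniformly in $h$. I would obtain this by covering $A$ (intersected with the slab) by countably many open boxes whose total ``column content'' is controlled: since each slice is null, one can choose for each slice a countable open cover by rectangles, and then use compactness of $A$ within the bounded slab together with the boundedness of the planes' distance from the origin to extract a cover that is open in the slab and whose slice-measures are all below $\varepsilon$. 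A clean way to package this is to note that the function $h\mapsto\lambda_2(U\cap\{y=h\})$ is lower semicontinuous for open $U$, and to build $U$ as a countable union of thin open slabs adapted to a dense set of heights, shrinking the thickening as needed so that no slice accumulates measure $\geq\varepsilon$.

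Once $U$ is constructed as an open neighbourhood of $A$ in the slab with all horizontal slices of measure less than $\varepsilon$, the direction-coverage requirement is immediate: given any admissible direction, $A$ contains a full translate of the corresponding plane in $V$, hence contains an interesting rectangle in that direction, and since $U$ is relatively open in $\mathbb{R}\times[0,1]\times\mathbb{R}$ and contains $A$, it contains a neighbourhood of that interesting rectangle in the sense defined above. Thus $U$ meets both requirements of the claim.

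The hard part will be the uniform measure control across all heights $h$ rather than for a fixed $h$. For a single slice, nullity gives a small open cover trivially; the obstacle is arranging these covers to fit together into one globally open set without the slice-measures blowing up as $h$ varies, especially near heights where the geometry of $A_h$ changes. I expect to handle this by exploiting that the planes in $A$ have bounded distance from the origin (so everything stays in a fixed bounded region of the slab) and by using the uniform-continuity/lower-semicontinuity of the slice-measure functional, choosing the thickening radius as a function of $h$ that tends to zero fast enough to keep $\sup_h\lambda_2(U\cap\{y=h\})<\varepsilon$.
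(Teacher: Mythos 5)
Your overall architecture matches the paper's (thicken the set $A$ of Lemma~\ref{t:folemma} inside the slab $\mathbb{R}\times[0,1]\times\mathbb{R}$; get nullity of every horizontal slice from the vertical-line condition via Fubini; get direction coverage from the translates of all planes of $V$ plus relative openness), but the heart of the claim --- producing \emph{one} open set whose slices are simultaneously small for \emph{all} $h$ --- is exactly the step you leave unproved, and the tool you invoke points the wrong way. For open $U$ the function $h\mapsto\lambda_2(U\cap\{y=h\})$ is indeed lower semicontinuous, but lower semicontinuity only prevents slice measures from dropping suddenly; it gives no upper control, which is what $\sup_h\lambda_2(U\cap\{y=h\})<\varepsilon$ requires. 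Your fallback sketch --- a countable union of thin slabs adapted to a dense set of heights, with a thickening radius depending on $h$ --- does not address the heights between the chosen ones (precisely where mass could accumulate), and an $h$-dependent radius moreover threatens the openness of $U$, i.e.\ the ``neighbourhood of an interesting rectangle'' requirement. You candidly flag this uniformity as ``the hard part,'' but no actual mechanism for it is supplied, so the proposal has a genuine gap at the central step.

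The paper closes this gap by exploiting the special geometry of $A$. It sets $B_\delta$ to be the Minkowski thickening of $A$ by $(-\delta,\delta)\times\{0\}\times(-\delta,\delta)$ (note: only in the $x$ and $z$ directions) intersected with $[-N,N]\times[0,1]\times[-N,N]$, and puts $f_\delta(h)=\lambda(B_\delta\cap\{y=h\})$. Compactness of $B_0$ gives continuity of $f_\delta(h)$ in $\delta$, and --- crucially --- since $A$ is a union of planes making a $45^\circ$ angle with $\{y=0\}$, one has $f_{\delta+t}(h_1)\ge f_\delta(h_2)$ whenever $|h_1-h_2|<t$: each point of a slice of the thin neighbourhood sits near a line where a $45^\circ$ plane of $A$ crosses that height, and that plane crosses the nearby height too, so the nearby slice of the slightly fatter neighbourhood dominates. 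This yields upper semicontinuity of $f_\delta$ in $h$, whence $f_{1/n}$ is a pointwise decreasing sequence of upper semicontinuous functions on the compact interval $[0,1]$ converging pointwise to $0$; by the Dini-type theorem for such sequences the convergence is uniform, giving a single $\delta_0$ with $U=B_{\delta_0}$. Your sketch contains no substitute for this $45^\circ$ comparison (one general alternative that would rescue your approach: slices of \emph{closed} $\delta$-neighbourhoods of a compact set are upper semicontinuous in $h$ by a reverse-Fatou argument on set limsups, after which the same Dini argument applies --- but that, too, is an upper semicontinuity input, not the lower semicontinuity you cite). As written, the proposal does not prove the claim.
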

\begin{proof}
    
Let $A$ be the set provided by Lemma \ref{t:folemma}. Fix an N large enough that 
\[ A \cap ((-N,N) \times [0,1] \times (-N,N))\]
contains an interesting rectangle in every direction.
 For any $\delta > 0$ let $A_\delta$ be $A + \{(x,0,z)| x^2 + z^2 <\delta\}$, where + denotes the Minkowski sum and define $A_0$ to be $A$. The set $A_\delta$ is open for all $\delta>0$, since $A$ is a union of planes (none of which are not parallel to $\{y=0\}$) and we replace each plane with an open set.
 Define
\[B_\delta = A_\delta \cap ([-N,N] \times [0,1] \times [-N,N]).\]
 The set $B_\delta$ is a relatively open in $[-N,N] \times [0,1] \times [-N,N]$ for all $\delta>0$, therefore $B_\delta$ contains a neighbourhood of an interesting rectangle in every possible direction.
 
 Let $f_\delta(h) = \lambda(B_\delta \cap \{y=h\})$. Using the fact that $B_0$ is compact, it is easy to verify that the function $f_\delta(h)$ is continuous in $\delta$. Since $f_0(h)=0, f_\delta(h)$ converges to 0 as $\delta$ tends to 0.
 
 We claim that
 \begin{equation}\label{e:suc}
      f_{\delta+t}(h_1) \ge f_{\delta}(h_2)
 \end{equation}
 whenever $|h_1 - h_2| < t$. In fact,
 \[(x, h_2, z) \in B_\delta \implies (x, h_1, z) \in B_{\delta+t}.\]
 Indeed, for every point $p$ in $B_\delta \cap \{ y= h_2 \}$ the set $A$ contains a plane that intersects $\{ y= h_2 \}$ in a line, which is at most distance $\delta$ away from $p$. This plane has a $45^\circ$ angle with the $\{ y= h_2 \}$ plane and intersects $\{ y= h_1 \}$ as well.

By \eqref{e:suc} and the continuity of $f_\delta(h)$ in $\delta$, we obtain that $f_\delta(h)$ is upper semicontinuous in $h$. Hence, $f_\frac{1}{n}$ is a sequence of upper semicontinuous functions on $[0,1]$, which is pointwise monotonically decreasing, and pointwise converges to 0. It is easy to prove that such a sequence must uniformly converge to 0. Therefore, there exists a $\delta_0 > 0$ such that $U=B_{\delta_0}$ has all the required properties.
\end{proof}

\subsection{Rotating an interesting rectangle}

Let $R$ be an interesting rectangle. Call a continuous motion $M$ of $R$ \textit{interesting} if at every moment it keeps $R$ interesting.
 \begin{lemma} \label{l:intmozg}
     For all $\varepsilon>0$ there exists an interesting motion of $R$, during which $R$ does a full a rotation, but $R \cap \{ y=t\}$ sweeps an area less than $\varepsilon$ for all $t$.
 \end{lemma}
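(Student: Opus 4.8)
The plan is to build the motion from two kinds of pieces: cheap \emph{rotations} that stay inside the set $U$ furnished by Claim~\ref{c:lu}, and cheap \emph{repositionings} at a fixed direction that exploit the fact that all horizontal slices of an interesting rectangle live in a bounded horizontal region. Apply Claim~\ref{c:lu} with parameter $\varepsilon/2$ to obtain an open set $U$ which, for every direction, contains a neighbourhood of an interesting rectangle in that direction, and which satisfies $\lambda(U\cap\{y=t\})<\varepsilon/2$ for all $t$.

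First, the local rotations. For each direction $\theta$ let $R_\theta\subset U$ be an interesting rectangle in direction $\theta$ whose neighbourhood $V_\theta$ lies in $U$. Since $V_\theta$ is relatively open and the map sending a (direction, horizontal position) pair to the corresponding interesting rectangle is continuous, there is an arc $J_\theta=(\theta-\eta_\theta,\theta+\eta_\theta)$ of directions such that tilting $R_\theta$ to any direction in $J_\theta$ (keeping its base fixed) yields an interesting rectangle still contained in $V_\theta\subset U$. The arcs $\{J_\theta\}$ cover the compact circle of directions, so finitely many of them, say $J_1,\dots,J_k$ centred at $\theta_1,\dots,\theta_k$, suffice; order them so that consecutive arcs overlap as we go once around the circle. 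Rotating within a single $J_i$ keeps the rectangle in $U$, so the horizontal slices traced out during \emph{all} of these rotations together form a subset of $U\cap\{y=t\}$ and hence sweep area at most $\lambda(U\cap\{y=t\})<\varepsilon/2$ in each plane $\{y=t\}$.

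It remains to connect, at a common direction $\theta\in J_i\cap J_{i+1}$, the end configuration of one rotation to the start of the next. Two interesting rectangles of the same direction differ only by a horizontal translation $\mathbf{w}$, so I would move between them with a P\'al join: slide along the short-side direction (each slice segment then slides along its own line, sweeping zero area), rotate the direction by a small angle $\phi$, slide again to realise the perpendicular part of $\mathbf{w}$, and rotate back by $\phi$. The two small rotations are rotations about a vertical axis, and here is the crucial point: because every interesting rectangle spans only $y\in[0,1]$ and has bounded horizontal extent, each horizontal slice segment stays within a bounded distance (at most a fixed constant $C$) of that axis, so each such rotation sweeps area at most $C\phi$ in every plane $\{y=t\}$, uniformly in $t$. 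Thus one connecting move costs at most $2C\phi$ per slice, irrespective of how large $|\mathbf{w}|$ is.

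Performing the $k$ connecting moves (together with at most two further such moves to enter and leave the loop at the starting rectangle $R$, so that the motion begins and ends at $R$ after turning through $2\pi$) costs at most $(k+2)\cdot 2C\phi$ of swept area in each slice. Choosing $\phi<\varepsilon/(4C(k+2))$ makes this less than $\varepsilon/2$, so the total area swept in any plane $\{y=t\}$ is below $\varepsilon/2+\varepsilon/2=\varepsilon$. The concatenation of the rotations and connecting moves is a continuous interesting motion realising a full rotation, which proves the lemma. The main obstacle is precisely the repositioning step: naively translating between the finitely many local families would sweep area proportional to the diameter of the region and is far too expensive; the P\'al join circumvents this, and the reason its cost stays uniformly small across all slices is the boundedness of the horizontal footprint of an interesting rectangle.
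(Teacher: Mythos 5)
Your proposal is correct and follows essentially the same route as the paper's proof: small rotations inside the set $U$ from Claim~\ref{c:lu} at finitely many directions chosen by compactness of the circle, connected by P\'al joins whose slides along the base (free) direction cost zero area and whose two small rotations cost $O(\phi)$ per slice. You merely make explicit the quantitative bookkeeping (the constant $C$ from the bounded footprint of an interesting rectangle and the choice $\phi<\varepsilon/(4C(k+2))$) that the paper leaves implicit.
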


\begin{proof}
    
We will use the idea of Pál joins to move between translated copies.
    We claim that for any interesting rectangle $R'$ parallel to $R$ there exists an interesting motion of $R$ during which $R\cap\{y=t\}$ sweeps less than $\varepsilon$ area for all $t$ and $R$ is translated to $R'$.

    There is a direction in which we can translate $R$ in such a way, that it sweeps 0 area at every height, call this its free direction. We will move $R$ in an N-like shape (Fig.~\ref{fig:pj}). We translate it in its free direction, there we rotate it by a small angle, then we translate it in its free direction, and rotate it in the opposite direction by the same angle. If we translate it far enough, then the angle of rotation can be arbitrarily small, and so the area swept at each height will be small.

\begin{figure}
    \centering
    \includegraphics[width=1\linewidth]{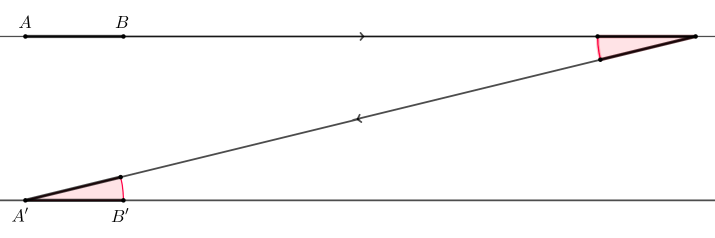}
    \caption{The motion of $R$ in the $y=0$ plane during a Pál join.}
        \label{fig:pj}
\end{figure}

Let $U$ be the set given by Claim~\ref{c:lu}. For any possible direction $d$, $U$ contains the neighbourhood of an interesting rectangle $R_d$, which is in the direction of $d$. For each $R_d$, there is an interesting motion, that rotates the rectangle a small amount within $U$. Since $S^1$ is compact, we can choose finitely many directions, whose neighbourhoods cover all directions. We have seen that we can move between parallel copies of $R$ while sweeping an arbitrarily small area at all heights, therefore these will only add an arbitrarily small area to the arbitrarily small area of $U$ at each height.
\end{proof} 
\subsection{Proof of the main result}
\begin{proof}[Proof of Theorem~\ref{t:mainresult}]
    
Let $M$ be the motion of an interesting rectangle $R$ given by Lemma~\ref{l:intmozg}. Let $T = R \cap \{ y=0 \}$. If at every moment we project $R$ onto the $\{ y=0\}$ plane, then we get $M'$, a motion of the unit square. Observe that during $M'$ the segment at $t$ distance from $T$ sweeps the same area, as $R \cap \{ y=t\}$ does during $M$.

\end{proof}

\section{Sets with the strong Kakeya property in $\mathbb{R}^3$}\label{s:sk}

Recall that we say that a compact set $K\subset \mathbb{R}^3$ is \textit{cylinderlike} if there exists $n\in \mathbb{N}$ such that for almost all $t$ the set $\{y=t\}\cap K$ can be covered by $n$ vertical lines.

\begin{lemma}\label{t:cylinderlike}
    If $\phi$ is a rotation around the $y$-axis, $K$ is cylinderlike, then $K$ can be moved to $\phi(K)$ in an arbitrarily small volume.
\end{lemma}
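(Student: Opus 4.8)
The plan is to reduce the motion of the cylinderlike set $K$ to the motion of the unit square supplied by Theorem~\ref{t:mainresult}, slice by slice in the $x$-direction. Since $\phi$ is a rotation about the $x$-axis, it acts independently on each slice $\{x=t\}\cap K$, rotating it within the plane $\{x=t\}$ by the same angle. The key structural fact is that $K$ is cylinderlike: for almost every $t$ the slice $\{x=t\}\cap K$ is covered by at most $n$ vertical lines, and since $K$ is compact, these slices live in a bounded region, so each slice is covered by $n$ vertical line segments of bounded length.

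First I would set up a common rescaling so that each slice of $K$ fits inside a fixed square in its plane. By compactness $K\subset [-L,L]^3$ for some $L$, so after an affine rescaling in the $(y,z)$-coordinates (the same for every $t$, since it commutes appropriately with the rotation up to controlling the rotation angle) the vertical segments covering each slice become initially vertical segments inside the unit square. Then I would apply the continuous motion $M'$ of the unit square given by Theorem~\ref{t:mainresult}, which performs a full rotation while each initially vertical segment sweeps area at most $\varepsilon'$. Applying this same planar motion simultaneously in every slice $\{x=t\}$ defines a continuous motion of $K$ from its initial position to $\phi(K)$, provided the target rotation angle of $\phi$ is achieved along the way (which it is, since the motion realizes a full rotation and hence every intermediate angle).

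To bound the volume swept, I would use Fubini: the volume swept equals $\int \lambda\bigl(\text{area swept in }\{x=t\}\bigr)\,dt$, so it suffices to bound the area swept in each slice uniformly. Within the slice $\{x=t\}$, the moving set $\{x=t\}\cap K$ is contained in the union of $n$ moving segments, each an initially vertical segment of the square. By Theorem~\ref{t:mainresult} each such segment sweeps area at most $\varepsilon'$, so the total area swept in that slice is at most $n\varepsilon'$ (plus a correction for the rescaling factor, which is a fixed constant depending only on $L$). Integrating over the bounded range of $t$ of length at most $2L$ gives a swept volume bounded by a constant multiple of $n\varepsilon'$. Choosing $\varepsilon'$ small enough makes this less than any prescribed volume.

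The main obstacle I expect is not the volume estimate but verifying that the slicewise planar motions assemble into a genuine \emph{continuous} motion of $K$ in $\mathbb{R}^3$, and that the resulting rigid motion of each slice is the \emph{same} rigid motion (a common rotation) across all $t$, so that the whole of $K$ moves rigidly rather than shearing. Since $\phi$ is a single rotation about the $x$-axis, the planar motion $M'$ must be applied identically in every slice; one must check that $M'$, being a rigid motion of the square at each time, induces a well-defined rigid motion of $\mathbb{R}^3$ fixing the $x$-axis direction, and that the rescaling does not destroy this rigidity. The subtlety is that Theorem~\ref{t:mainresult} controls the swept area per \emph{initially vertical} segment, so I must ensure the $n$ covering lines of each slice are genuinely vertical in the chosen coordinates before applying the square motion; this is exactly the content of $K$ being cylinderlike, so the hypothesis is used precisely here.
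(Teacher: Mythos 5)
Your proposal is correct and takes essentially the same route as the paper: both slice $K$ by planes perpendicular to the $x$-axis, cover the projection of $K$ onto $\{x=0\}$ by a square, apply the motion of Theorem~\ref{t:mainresult} simultaneously in every slice so that each of the $n$ vertical covering segments sweeps area at most $\varepsilon$, and conclude via Fubini. The differences are cosmetic (you rescale to the unit square where the paper simply covers the projection by a square and scales the estimate), and your glossed-over points --- stopping at the intermediate angle and correcting the residual translate --- are treated at the same level of detail in the paper itself, which defers translations to Pál joins in the proof of Theorem~\ref{t:fogantyu}.
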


\begin{proof} We will look at $K$ from the direction of the $y$-axis and give a motion that keeps the $y$-coordinate of every point constant. The set $K$ is bounded, so its projection onto the $\{y=0\}$ plane can be covered by a square. Apply Theorem \ref{t:mainresult} to this square. This naturally induces a motion on $K$. For almost all planes perpendicular to the $y$-axis the swept area will be less than $n\varepsilon$, since if a plane contains $n$ vertical segments, then the motion of each segment corresponds to the motion of a vertical segment of the square and these all sweep an area less than $\varepsilon$. Applying Fubini's theorem we are done.

\end{proof}

\begin{remark}
    It is easy to check that the proof of the above lemma also works for \emph{general cylinderlike} sets defined as follows: For a compact set $K\subset \mathbb{R}^3$ let $n(t)$ be the minimum (possibly infinite) number of vertical lines required to cover $K \cap \{ y= t\}$. Say that $K$ is \textit{general cylinderlike} if $\int_\mathbb{R} n(t) dt < \infty$.
\end{remark}

Recall that we say that a compact set $K$ is \emph{cylinderlike from direction $d$} if there is an orthogonal coordinate system, in which the $y$-axis is parallel to $d$, and $K$ is cylinderlike.

By Lemma $\ref{t:cylinderlike}$: If $K$ is cylinderlike from direction $d$, then $K$ can be rotated around $d$ in an arbitrarily small volume.

 \begin{proof}[Proof of Theorem~\ref{t:fogantyu}] Translations can be done again with Pál joins: If $K$ is cylinderlike from some direction, then there exists a direction in which K can be translated in 0 volume. We can bring it very far from its original position sweeping 0 volume, there we rotate it by a very small angle, translate it in the free direction and rotate it back by the same angle. Therefore, we can move $K$ to an arbitrary translated position in an arbitrarily small area.
 
 Let $\phi$ be an orientation preserving isometry of $\R^3$ such that $\phi(K)$ is the desired position of $K$. By using translations we can suppose that the origin is a fixed point of $\phi$. Take unit vectors $v_1$ and $v_2$ parallel to $d_1$ and $d_2$ respectively. This can be done in a way such that their distance on $S^2$ is $t\le \frac\pi2$. Since $v_1$ and $v_2$ are non-parallel, the vectors $\phi(v_1), \phi(v_2)$ uniquely determine $\phi$. From this point forward every distance is the arc distance on $S^2$. By Lemma~\ref{t:cylinderlike} we can rotate $K$ around the vectors $v_1, v_2$ in an arbitrarily small volume. Therefore, it is enough to solve the following problem: Take $S^2$ and stick two needles ($n_1$ and $ n_2$) into it, at endpoints of $v_1$ and $v_2$ respectively. The following step is allowed: take a needle and rotate the sphere around it. The other needle moves accordingly. We need to prove that we can get the needles to any pair of fixed points $p_1$ and $p_2$, which are $t$ distance apart, in a finite number of steps.

\emph{First case: Suppose $n_1$ and $p_1$ are further than $t$ apart.} Then we rotate around $n_1$ in a way such that $n_2$ moves onto a short arc between $n_1 $ and $ p_1$. Then we rotate around $n_2$ by $\pi$. If the original distance of $n_1$ and $p_1$ was $s$, then their new distance after this step will be $|s-2t|$. It is clear that after a finite number of steps $n_1$ and $ p_1$ will be at most $t$ distance apart.

 \emph{Second case: Suppose $n_1, p_1$ are at most $t$ apart.} We rotate around $n_1$ in a way such that $n_2$ moves onto the circle centred around $p_1$ of radius $t$. We can rotate around $n_2$ in such a way that $n_1$ moves into $p_1$. Now we can rotate $n_2$ into $p_2$ by rotating around $n_1$.
 \end{proof}

\begin{cor}\label{c:axs}
    If for a compact set $A \subset \mathbb{R}^2$ there exists an $n\in\mathbb{N}$ and non-parallel directions $d_1$ and $d_2$, such that every line perpendicular to $d_i$ intersects $A$ in at most $n$ points for $i\in\{1,2\}$, then $A\times[0,1]$ has the strong Kakeya property.
\end{cor}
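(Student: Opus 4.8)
The plan is to deduce Corollary~\ref{c:axs} directly from Theorem~\ref{t:fogantyu}: I will show that $K := A\times[0,1]$, which is compact since $A$ is compact, is cylinderlike from the two non-parallel directions $d_1$ and $d_2$ after embedding them into $\mathbb{R}^3$. I view $A$ as lying in the horizontal plane $\mathbb{R}^2\times\{0\}$, so that $K=\{(a_1,a_2,s):(a_1,a_2)\in A,\ s\in[0,1]\}$ is obtained by extruding $A$ along the vertical ($z$-)axis $e_3$. For $i\in\{1,2\}$ I regard $d_i$ as a horizontal unit vector $\tilde d_i=(d_i,0)\in\mathbb{R}^3$; since $d_1,d_2$ are non-parallel in the plane, $\tilde d_1,\tilde d_2$ are non-parallel in space, and both are orthogonal to $e_3$.

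First I would fix $i$ and choose an orthonormal coordinate system in which the $x$-axis points along $\tilde d_i$ and the $z$-axis points along $e_3$; this is legitimate precisely because $\tilde d_i\perp e_3$, and the $y$-axis is then the remaining horizontal direction. In this system the $x$-coordinate of a point $(a_1,a_2,s)\in K$ equals the planar projection $(a_1,a_2)\cdot d_i$ and does not involve $s$. Hence for each $t$ the slice $\{x=t\}\cap K$ equals $(A\cap\ell_t)\times[0,1]$, where $\ell_t\subset\mathbb{R}^2$ is the line perpendicular to $d_i$ determined by $t$.

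Next I would invoke the hypothesis: every line perpendicular to $d_i$ meets $A$ in at most $n$ points, so $A\cap\ell_t$ consists of at most $n$ points and $\{x=t\}\cap K$ is a union of at most $n$ vertical segments $\{p\}\times[0,1]$, each contained in a line parallel to the $z$-axis. Thus $\{x=t\}\cap K$ is covered by $n$ vertical lines for every $t$ (in fact for all $t$, not merely almost all, so no measure-theoretic care is needed), and $K$ is cylinderlike from direction $\tilde d_i$. Applying this for $i=1,2$ and using that $\tilde d_1,\tilde d_2$ are non-parallel, Theorem~\ref{t:fogantyu} yields that $K$ has the strong Kakeya property.

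The argument is essentially bookkeeping and I do not expect a genuine obstacle; the one point that must be handled with care is that the \emph{same} vertical axis $e_3$ serves as the $z$-axis in both coordinate systems. This is what guarantees that both families of slices consist of vertical segments rather than slanted ones, and it is available exactly because both $d_1$ and $d_2$ are horizontal. With this choice the point-count hypothesis on $A$ transfers verbatim into the bound $n$ on the number of covering vertical lines required by the definition of cylinderlike.
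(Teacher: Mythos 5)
Your proposal is correct and follows exactly the paper's route: the paper likewise proves the corollary in one line by observing that $A\times[0,1]$ is cylinderlike from directions $d_1$ and $d_2$ and invoking Theorem~\ref{t:fogantyu}. Your write-up merely makes explicit the coordinate-system bookkeeping (taking the $z$-axis along the extrusion direction in both systems) that the paper leaves implicit.
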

\begin{proof}
    The set $A\times[0,1]$ is cylinderlike from directions $d_1$ and $d_2$, therefore by Theorem~\ref{t:fogantyu} it has the strong Kakeya property.
\end{proof}

\begin{cor}
    If, in a fixed coordinate system, a compact $A$ can be covered by a finite union of graphs of Lipschitz functions, then $A\times[0,1]$ possesses the strong Kakeya property.
\end{cor}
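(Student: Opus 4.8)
The plan is to reduce this corollary to the previous one, Corollary~\ref{c:axs}, by showing that a finite union of Lipschitz graphs satisfies its hypothesis. The key observation is that the graph of a single Lipschitz function $g:\mathbb{R}\to\mathbb{R}$ has the property that every \emph{vertical} line meets it in exactly one point (so at most one point), and, crucially, if the Lipschitz constant is $L$, then every line whose slope exceeds $L$ in absolute value also meets the graph in at most one point. This is because along such a steep line the graph of $g$ can cross at most once: if the line has slope $s$ with $|s|>L$, then the difference between the line and $g$ is strictly monotone (its derivative, where it exists, has absolute value at least $|s|-L>0$, and the difference is itself Lipschitz hence absolutely continuous), so it has at most one zero.

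First I would fix a Lipschitz constant $L$ that works simultaneously for all finitely many functions $g_1,\dots,g_k$ whose graphs cover $A$ (take the maximum of the individual constants). Then I would choose two directions $d_1,d_2$ whose perpendicular lines are ``steep enough.'' Concretely, a line perpendicular to $d_i$ has some slope $s_i$; I want to pick $d_1,d_2$ non-parallel so that the lines perpendicular to each have slope of absolute value strictly greater than $L$. For instance, near-horizontal directions $d_i$ give near-vertical perpendicular lines, which have arbitrarily large slope; so I can choose two distinct near-horizontal directions $d_1\neq d_2$. With this choice, a line perpendicular to $d_i$ meets each individual graph $\graph(g_j)$ in at most one point, hence meets the union $\bigcup_{j=1}^k \graph(g_j)\supset A$ in at most $k$ points. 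This is exactly the hypothesis of Corollary~\ref{c:axs} with $n=k$.

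Having verified the hypothesis, I would invoke Corollary~\ref{c:axs} directly to conclude that $A\times[0,1]$ has the strong Kakeya property. The structure of the argument is therefore: (i) extract a common Lipschitz constant; (ii) record the elementary fact that a line steeper than $L$ crosses an $L$-Lipschitz graph at most once; (iii) choose two non-parallel directions whose perpendiculars are steeper than $L$; (iv) bound the number of intersection points of such a perpendicular line with $A$ by the number of graphs; (v) apply Corollary~\ref{c:axs}.

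The main obstacle, such as it is, lies in step (ii) and the careful handling of the case where a perpendicular line is exactly vertical versus merely steep. If one chooses $d_i$ so that the perpendicular lines are precisely vertical, one must argue intersection with a \emph{vertical} line, which meets each graph in exactly one point trivially; but then the two directions $d_1,d_2$ would both have to yield vertical perpendiculars, forcing $d_1\parallel d_2$, violating non-parallelism. So the real content is that one direction can use vertical perpendiculars while the other must use merely steep (non-vertical) perpendiculars, and for the latter I need the monotonicity argument above rather than the trivial vertical-line count. Once that elementary geometric fact is pinned down, the rest is a direct reduction with no further analytic difficulty.
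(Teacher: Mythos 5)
Your proposal is correct and follows essentially the same route as the paper, which also reduces to Corollary~\ref{c:axs} via the observation that an $L$-Lipschitz graph meets every sufficiently steep line in at most one point; you merely spell out the common Lipschitz constant, the monotonicity argument for the ``at most once'' crossing, and the choice of two non-parallel directions with steep perpendiculars, all of which the paper leaves implicit.
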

\begin{proof}
    The graph of a Lipschitz function intersects every steep enough line in one point, therefore we can use Corollary~\ref{c:axs}.
\end{proof}

\begin{cor}\label{r:ert}
    If, in a fixed coordinate system, a compact $A$ is such that it can be covered by the graphs of a finite number of monotonic functions, then $K$ possesses the strong Kakeya property.
\end{cor}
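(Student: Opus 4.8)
The plan is to deduce the statement from Theorem~\ref{t:fogantyu} by exhibiting two non-parallel directions from which $K=A\times[0,1]$ is cylinderlike, namely the direction of the $x$-axis and that of the $y$-axis. I would write $A\subset\R^2$ as the union of the graphs of finitely many monotonic functions $g_1,\dots,g_k$, and recall that slicing $K$ by the plane $\{x=t\}$ gives $(A\cap\{x=t\})\times[0,1]$, which can be covered by as many vertical lines as there are points in $A\cap\{x=t\}$; the same holds for the plane $\{y=c\}$. Thus it suffices to bound, for almost every line in each of the two axis directions, the number of its intersection points with $A$.

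For the $x$-direction this is immediate: since each $g_j$ is a function of $x$, every vertical line $\{x=t\}$ meets its graph in at most one point, and hence meets $A$ in at most $k$ points. Therefore $K$ is cylinderlike from the direction of the $x$-axis with $n=k$, and in fact the bound holds for every such line.

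For the $y$-direction I would use monotonicity. A horizontal line $\{y=c\}$ meets the graph of $g_j$ in the set $\{(x,c): g_j(x)=c\}$, and because $g_j$ is monotonic the preimage $g_j^{-1}(c)$ is an interval. This preimage contains more than one point precisely when $g_j$ is constant, equal to $c$, on a nondegenerate interval; as distinct such values $c$ correspond to disjoint intervals of constancy, there are only countably many of them. Consequently, for all but countably many $c$ the line $\{y=c\}$ meets each graph in at most one point, and hence meets $A$ in at most $k$ points. Since the definition of cylinderlike only requires the slice bound for almost all $t$, this shows $K$ is cylinderlike from the direction of the $y$-axis, again with $n=k$. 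The two directions are non-parallel, so Theorem~\ref{t:fogantyu} yields that $K$ has the strong Kakeya property.

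The one point that needs care is the horizontal direction, and it is also the reason I would invoke Theorem~\ref{t:fogantyu} through the definition of cylinderlikeness rather than quoting Corollary~\ref{c:axs} verbatim: a merely (non-strictly) monotonic function can be constant on an interval, so certain horizontal lines meet its graph in an entire segment. This is exactly why the intersection bound may fail on a countable, hence null, set of heights, and why the almost-everywhere formulation built into cylinderlikeness — as opposed to the ``every line'' hypothesis of Corollary~\ref{c:axs} — is what makes the argument go through.
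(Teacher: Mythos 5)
Your proof is correct, and it takes a mildly but genuinely different route from the paper's. The paper's proof of Corollary~\ref{r:ert} is a one-liner: it asserts that $A$ meets \emph{every} horizontal and vertical line in at most $n$ points and then invokes Corollary~\ref{c:axs}. As you observe, that assertion fails for non-strictly monotonic functions: if some covering function $g_j$ is constant on a nondegenerate interval, the horizontal line at that value meets its graph in a whole segment (e.g.\ $A$ may contain a horizontal segment), so the ``every line'' hypothesis of Corollary~\ref{c:axs} is not literally satisfied. You instead bypass Corollary~\ref{c:axs}, verify the definition of cylinderlike directly for the two axis directions, and use the countability of the constancy values of each $g_j$ --- distinct values correspond to disjoint nondegenerate intervals of constancy, hence there are only countably many exceptional heights, a Lebesgue-null set --- to obtain the at-most-$k$ intersection bound for almost every horizontal line, which is all that cylinderlikeness demands; Theorem~\ref{t:fogantyu} then applies. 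Comparing the two: the paper's argument is shorter and is correct if ``monotonic'' is read as strictly monotonic, or if one notes that the proof of Corollary~\ref{c:axs} (which only passes through the cylinderlike property and Theorem~\ref{t:fogantyu}) in fact uses the intersection bound only for almost every line, so its hypothesis could be weakened accordingly; your argument is the careful version that handles the general non-strict case directly and, in effect, repairs this small imprecision in the paper's proof.
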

\begin{proof}
    Under such conditions $A$ intersects every horizontal and vertical line in at most $n$ points, where $n$ is the number of monotonic functions required to cover $A$. Again Corollary~\ref{c:axs} can be applied.
\end{proof}

\begin{cor}
    The lateral surface of a cylinder has the strong Kakeya property. Moreover, the finite union of parallel lateral cylinder surfaces possesses the strong Kakeya property.
\end{cor}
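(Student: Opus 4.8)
The plan is to reduce both assertions to Theorem~\ref{t:fogantyu} by exhibiting two non-parallel directions from which each set is cylinderlike; for the single cylinder one may alternatively invoke Corollary~\ref{c:axs} directly.

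First I would treat a single curved cylinder surface. After a rigid motion I may assume its axis is parallel to the $z$-axis, so that the surface is $\{(x,y,z): (x-x_0)^2+(y-y_0)^2=r^2,\ z\in[0,h]\}$. Slicing by a plane $\{x=t\}$ perpendicular to the $x$-axis gives, for all but the two tangent values $t=x_0\pm r$, exactly two vertical segments, hence a set covered by two vertical lines; thus the surface is cylinderlike from the $x$-direction with $n=2$. By the rotational symmetry about the axis the same holds from the $y$-direction, and the $x$- and $y$-directions are non-parallel, so Theorem~\ref{t:fogantyu} applies. Equivalently, the surface is $S\times[0,h]$ with $S$ a circle, and since every line in a fixed direction meets $S$ in at most two points, Corollary~\ref{c:axs} yields the conclusion at once.

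For a finite union of $k$ curved cylinder surfaces with parallel axes, I would again place the common axis direction along the $z$-axis. A plane $\{x=t\}$ meets each of the $k$ surfaces in at most two vertical segments, so for almost every $t$ the slice of the union is covered by at most $2k$ vertical lines; the union is therefore cylinderlike from the $x$-direction with $n=2k$, and likewise from the $y$-direction. These two directions are non-parallel, so Theorem~\ref{t:fogantyu} gives the strong Kakeya property. This argument is insensitive to the cylinders having different radii, centres, or heights, since each merely contributes a bounded number of vertical segments to every slice.

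There is no substantial obstacle here: the statement is a direct specialization of the earlier results, with the conceptual content already absorbed into Theorem~\ref{t:fogantyu}. The only points demanding a little care are the routine bookkeeping—choosing the orthogonal coordinate system so that the common axis becomes the vertical direction while a perpendicular direction becomes the new $x$-axis, discarding the measure-zero set of tangent slices $t=x_0\pm r$, and confirming that two directions perpendicular to the axis are genuinely non-parallel.
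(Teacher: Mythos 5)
Your proof is correct and follows exactly the argument the paper intends: the paper states this corollary without a written proof precisely because, as you show, each parallel cylinder surface contributes at most two vertical segments to almost every slice perpendicular to any direction orthogonal to the common axis, so the union is cylinderlike from two non-parallel directions and Theorem~\ref{t:fogantyu} (or, for a single cylinder, Corollary~\ref{c:axs} with $A$ a circle) applies. Your handling of the tangent slices and of cylinders with differing radii, centres, and heights is careful and matches the paper's definitions.
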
 

\begin{cor}
    If a compact $K\subset\R^3$ can be covered by a finite set of planes that have normal vectors in a common plane, then $K$ has the strong Kakeya property.
\end{cor}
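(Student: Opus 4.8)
The plan is to deduce this from Theorem~\ref{t:fogantyu} by exhibiting two non-parallel directions from which $K$ is cylinderlike. Write $\Pi_1,\dots,\Pi_k$ for the finitely many planes covering $K$, and let $\nu_1,\dots,\nu_k$ be their unit normals. By hypothesis all the $\nu_i$ lie in a common plane through the origin; let $m$ be a unit vector orthogonal to that plane, so that $\nu_i\cdot m=0$ for every $i$. (If the $\nu_i$ happen to span only a line, any unit vector orthogonal to that line serves as $m$.)

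First I would record the elementary slicing computation that identifies when a single plane is cylinderlike. Fix an orthonormal coordinate system with $x$-axis along a direction $d$ and $z$-axis along a vertical direction $\hat{z}\perp d$. For a plane $\Pi$ with normal $\nu=(\nu_1,\nu_2,\nu_3)$ in these coordinates, the slice $\Pi\cap\{x=t\}$ is the line $\{(t,y,z):\nu_2 y+\nu_3 z=c-t\nu_1\}$. If $\nu_3=\nu\cdot\hat{z}\neq 0$ this slice has a nondegenerate projection to the $y$-axis and cannot be covered by finitely many vertical ($z$-parallel) lines; if $\nu_3=0$ the slice is, for almost every $t$, either empty (when $\nu\parallel d$) or a single vertical line. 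Hence a finite union of planes whose normals all satisfy $\nu\cdot\hat{z}=0$ is cylinderlike in this coordinate system, each slice being covered by at most $k$ vertical lines.

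Next I would choose the vertical direction to be $\hat{z}=m$. Then $\nu_i\cdot\hat{z}=0$ for all $i$ by construction, so the only remaining requirement for a valid coordinate system is $\hat{z}\perp d$, that is $d\perp m$, i.e.\ $d$ lies in the common plane $P$ of the normals. Since $P$ is two-dimensional, I can pick two non-parallel directions $d_1,d_2\in P$; for each of them the coordinate system with $x$-axis $d_i$ and $z$-axis $m$ makes $K$ (being a subset of $\Pi_1\cup\dots\cup\Pi_k$) cylinderlike by the previous paragraph. Thus $K$ is cylinderlike from two non-parallel directions, and Theorem~\ref{t:fogantyu} gives the strong Kakeya property.

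The content here is almost entirely in the slicing observation; the only subtlety, and the place where the hypothesis is essential, is that one must find a single vertical direction $\hat{z}$ that works simultaneously for every plane in the cover. This is exactly what ``all normals lie in a common plane'' provides: the common normal $m$ is forced to be that shared vertical direction, and the freedom to rotate the $x$-axis within $P$ then supplies the two non-parallel cylinderlike directions demanded by Theorem~\ref{t:fogantyu}. The degenerate cases (a plane perpendicular to $d_i$, or all normals parallel) are harmless, since the former contributes a nonempty slice for only one value of $t$ and the latter only enlarges the choice of admissible $m$.
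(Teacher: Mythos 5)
Your proposal is correct: the paper states this corollary without any written proof, and your argument is precisely the intended application of Theorem~\ref{t:fogantyu} --- taking the vertical direction $\hat{z}=m$ normal to the common plane $P$ of the $\nu_i$, so that each covering plane meets almost every slice $\{x=t\}$ in a single vertical line (or is empty), and then using the two-dimensional freedom of $P$ to pick non-parallel directions $d_1,d_2$. Your handling of the degenerate cases (a plane with $\nu_i$ parallel to $d$, or all normals collinear) is also correct, since the former affects only one value of $t$ and the latter only widens the choice of $m$.
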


\end{document}